\newtheorem{theorem}{Theorem}[section]
\newtheorem{fact}[theorem]{Fact}
\newtheorem{lemma}[theorem]{Lemma}
\newtheorem{proposition}[theorem]{Proposition}
\newtheorem*{proposition*}{Proposition}
\newtheorem*{theorem*}{Theorem}
\theoremstyle{definition}
\newtheorem{definition}[theorem]{Definition}
\DeclareMathOperator{\tp}{tp}
\title{A note on Pontryagin duality and continuous logic}
\date{\today}
\author{Nicolas Chavarria\\University of Notre Dame \and Anand Pillay\thanks{Supported by NSF grants DMS-1665035, DMS-1760212, and DMS-2054271}\\University of Notre Dame}
\begin{document}
  
    \maketitle

    \begin{abstract}
        We exhibit Pontryagin duality as a special case of Stone duality in a continuous logic setting. More specifically, given an abelian topological group $A$, and $\mathcal F$ the family (group) of continuous homomorphisms from $A$ to the circle group $\mathbb T$, then, viewing $(A,+)$ equipped with the collection $\mathcal F$ as a continuous logic structure $M$, we show that the local type space $S_\mathcal F(M)$ is precisely the Pontryagin dual of the group $\mathcal F$ where the latter is considered as a discrete group.
        
        We conclude, using Pontryagin duality (between compact and discrete abelian groups), that $S_\mathcal F(M)$ is the Bohr compactification of the topological group $A$. 
    \end{abstract}

    \section{Introduction}

    We make some observations relating type spaces in continuous logic, Pontryagin duals, and Bohr compactifications. Writing this down was motivated by some questions of Marcus Tressl about our paper \cite{chavarriaPillay} asking what, if anything, is the connection with Pontryagin duality.

    The point of our paper \cite{chavarriaPillay} was that the theory, in continuous logic, of structures of the form ---an abelian group equipped with homomorphisms to a compact group--- is stable. However, stability is not relevant to the issues we discuss in the present paper, which will be elementary at the technical level, but possibly of general interest.

    Given an abelian topological group $A$, we consider the family $\mathcal F$ of all \emph{continuous} homomorphisms to $\mathbb T$, and consider $(A,+,-,0,f)_{f\in {\mathcal F}}$ as a continuous logic structure $M$. We form the space $S_{\mathcal F}(M)$ of ${\mathcal F}$-types over $M$, a compact space. There are two points that we make in this paper:
    \begin{enumerate}[label=(\roman*)]
        \item $S_\mathcal F(M)$, with its natural group structure, is the Pontryagin dual of ${\mathcal F}$ where the latter is considered as a discrete abelian group, and
        \item $S_{\mathcal F}(M)$, with the natural homomorphism from $A$, is the Bohr compactification $\iota:A\to bA$ of $A$.
    \end{enumerate}
    Point (i) says that the move from the family ${\mathcal F}$ of ``formulas" to the space of ${\mathcal F}$-types is precisely taking the Pontryagin dual. Recall that the Pontryagin dual of a discrete abelian group $B$ is the group of all homomorphisms from $B$ to $\mathbb T$, with the Tychonoff topology (induced from ${\mathbb T}^{B}$). And the Bohr compactification of a topological group is the universal object among continuous homomorphisms to compact groups. Then point (ii) follows from (i) by Pontryagin duality. Both (i) and (ii) are explained (proved) in \Cref{section:mainResult}.

Thanks to the referee for the helpful suggestions and comments.

    \section{Preliminaries}

    We go through the necessary background. When we talk about topological spaces or groups, we will always assume that the topology is Hausdorff. Throughout, $\mathbb T$ will denote the circle group, a compact abelian topological group which also has a compatible metric. We will treat it as a subset of $\mathbb C$ and so write it multiplicatively.

    \subsection{Bohr compactification and Pontryagin duality}

    We repeat the definition of Bohr compactifications. Given an arbitrary topological group $G$, we can consider continuous homomorphisms $f$ from $G$ to compact topological groups $C$. There is a universal such $f:G\to C$, namely such that for every continuous homomorphism $g$ from $G$ to a compact group $D$ there is unique continuous homomorphism $h:C\to D$ such that $g=h\circ f$. This universal object is called the Bohr compactification of $G$, and sometimes written $bG$, or $\iota:G\to bG$ to include the continuous homomorphism $\iota$. 
    
    \begin{definition}
        Let $A$ be an abelian topological group. By $\widehat A$, the Pontryagin dual of $A$, we mean the group of continuous homomorphisms from $A$ to $\mathbb T$ equipped with the compact-open topology.
    \end{definition}

    Recall that this compact-open topology has as a sub-basis of opens, $\{f\in\widehat A:f(K)\subseteq U\}$, where $K\subseteq A$ is compact, and $U\subseteq {\mathbb T}$ is open. It coincides with the topology of uniform convergence on compact subsets of $A$ (using the metric on $\mathbb T$). When $A$ is discrete the topology is precisely that induced by the Tychonoff topology on $\mathbb T^A$. 

    Given an abelian topological group $A$, we have a homomorphism $ev$ (for evaluation) from $A$ to $\widehat{\widehat A}$, where $ev(a)$ is the continuous function from $\widehat A$ to $\mathbb T$ which takes $f$ to $f(a)$. Pontryagin duality says that, for $A$ a {\em locally compact} abelian group $A$, this homomorphism is an isomorphism of topological groups. We will only use it in the case of compact $A$, which we state now (see \cite{katznelson}, Chapter IV). 
    
    \begin{fact}\label{fact:pontryagin}
        \begin{enumerate}[label=(\roman*)]
            \item If $A$ is compact abelian, then $\widehat A$ is discrete abelian, and if $A$ is discrete then $\widehat A$ is compact.
            \item For $A$ compact, $ev:A\to \widehat{\widehat A}$ is an isomorphism of topological groups.
        \end{enumerate}
    \end{fact}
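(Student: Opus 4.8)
For part (i), first suppose $A$ is discrete. Then the compact subsets of $A$ are the finite ones, so the compact-open topology on $\widehat A$ agrees with the subspace topology from $\mathbb T^A$; since $\widehat A$ consists of those $f\in\mathbb T^A$ satisfying the closed conditions $f(a+b)=f(a)f(b)$ for all $a,b$, it is a closed subset of the compact group $\mathbb T^A$ and hence compact. Now suppose $A$ is compact and fix the open arc $V=\{e^{i\theta}:|\theta|<\pi/2\}$ around $1$ in $\mathbb T$. If $f\in\widehat A$ has $f(A)\subseteq V$, then $f(A)$ is a subgroup of $\mathbb T$ contained in $V$, and since $V$ contains no nontrivial subgroup of $\mathbb T$ (any nontrivial element of $\mathbb T$ has a power of argument in $[\pi/2,\pi]$) we get $f(A)=\{1\}$, i.e.\ $f$ is trivial. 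As $A$ is itself compact, $\{f\in\widehat A:f(A)\subseteq V\}$ is a basic open set, so the trivial character is isolated in $\widehat A$; by continuity of the group operations every singleton is then open, so $\widehat A$ is discrete.

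For part (ii), by (i) the group $\widehat{\widehat A}$ is compact, and since $\widehat A$ is discrete its compact-open topology is the product topology from $\mathbb T^{\widehat A}$, so $\widehat{\widehat A}$ is also Hausdorff. The evaluation map $ev\colon A\to\widehat{\widehat A}$ is a homomorphism, and it is continuous: a map into $\mathbb T^{\widehat A}$ is continuous as soon as each coordinate $a\mapsto f(a)$ is, which holds. Since $A$ is compact, $ev(A)$ is a compact, hence closed, subgroup of $\widehat{\widehat A}$; so it suffices to prove $ev$ is injective and surjective, for then it is a continuous bijective homomorphism from a compact space onto a Hausdorff space, hence an isomorphism of topological groups. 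Injectivity says precisely that $\bigcap_{f\in\widehat A}\ker f=\{0\}$, i.e.\ that the continuous characters of $A$ separate its points; equivalently, that every nontrivial compact Hausdorff abelian group has a nontrivial continuous character. This is the abelian case of the Peter--Weyl theorem, and is the one genuinely nonelementary ingredient of the proof; I would deduce it from the existence of Haar measure and the decomposition of $L^2(A)$ into one-dimensional character eigenspaces, or simply cite it.

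For surjectivity, let $\Phi\in\widehat{\widehat A}$ --- so $\Phi$ is a homomorphism $\widehat A\to\mathbb T$, automatically continuous since $\widehat A$ is discrete --- and seek $a\in A$ with $f(a)=\Phi(f)$ for all $f\in\widehat A$. For each $f$ put $F_f=\{a\in A:f(a)=\Phi(f)\}$, a closed subset of $A$. By compactness of $A$ it is enough to show that finitely many $F_f$ always have nonempty intersection (the case of one $f$ included, so in particular each $F_f$ is nonempty). Fix $f_1,\dots,f_n\in\widehat A$, let $H=\{(f_1(a),\dots,f_n(a)):a\in A\}$, a closed subgroup of $\mathbb T^n$, and suppose for contradiction that $p:=(\Phi(f_1),\dots,\Phi(f_n))\notin H$. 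Since $\mathbb T^n/H$ is then a nontrivial compact abelian group, the Peter--Weyl input gives a nontrivial continuous character of $\mathbb T^n$ that is trivial on $H$; as every character of $\mathbb T^n$ has the form $(z_1,\dots,z_n)\mapsto z_1^{m_1}\cdots z_n^{m_n}$ with $(m_1,\dots,m_n)\in\mathbb Z^n$ (which reduces to the standard computation $\widehat{\mathbb T}\cong\mathbb Z$ via lifting through $\mathbb R\to\mathbb T$), we obtain integers $m_1,\dots,m_n$, not all zero, with $\prod_i f_i(a)^{m_i}=1$ for all $a\in A$ but $\prod_i\Phi(f_i)^{m_i}\neq1$. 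The former condition says $\sum_i m_i f_i=0$ in $\widehat A$, so $\prod_i\Phi(f_i)^{m_i}=\Phi\bigl(\sum_i m_i f_i\bigr)=\Phi(0)=1$, a contradiction. Hence $p\in H$, the family $\{F_f\}_{f\in\widehat A}$ has the finite intersection property, and any point of $\bigcap_f F_f$ is the desired $a$.

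I expect the sole real obstacle to be the separation of points of a compact abelian group by its continuous characters (Peter--Weyl); granting it, parts (i) and (ii) follow, with everything else --- the continuity and closedness assertions, the computation $\widehat{\mathbb T}\cong\mathbb Z$, the finite-intersection argument, and the fact that a continuous bijection from a compact space to a Hausdorff space is a homeomorphism --- being routine. One could instead obtain surjectivity of $ev$ from Pontryagin duality for the \emph{discrete} group $\widehat A$ via the triangle identity $\widehat{ev_A}\circ ev_{\widehat A}=\mathrm{id}_{\widehat A}$, but the direct compactness argument above keeps the list of prerequisites shorter.
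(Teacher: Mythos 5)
Your proposal is correct in substance, but note that the paper does not prove this statement at all: it is stated as a \emph{Fact} and delegated to the literature (Katznelson, Chapter IV), so there is no in-paper argument to compare against. What you have written is essentially the standard textbook proof, and you correctly isolate the one genuinely nonelementary ingredient, namely that the continuous characters of a compact abelian group separate points (the abelian Peter--Weyl theorem via Haar measure and the $L^2$ decomposition); everything else --- discreteness of $\widehat A$ via the no-small-subgroups arc $V$, compactness of the dual of a discrete group as a closed subgroup of $\mathbb T^A$, continuity of $ev$, and the finite-intersection-property argument for surjectivity --- is sound. One small imprecision in the surjectivity step: from $p\notin H$ you need a character of $\mathbb T^n/H$ that is nontrivial \emph{at the coset $pH$}, not merely a nontrivial character of $\mathbb T^n/H$; this is exactly the separation-of-points form of Peter--Weyl that you already granted yourself for injectivity (and for the particular group $\mathbb T^n/H\cong\mathbb T^k\times F$ it is even an elementary computation), so the fix is a one-line rewording rather than a gap. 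If you wanted to shorten the prerequisites further, you could, as you note, get surjectivity from the triangle identity together with duality for discrete groups, but your direct compactness argument is clean and self-contained given the Peter--Weyl input.
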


    We can now state a (well-known) description in the above terms of the Bohr compactification of an abelian topological group. This is often stated in the literature only for locally compact abelian groups (and sometimes even as the definition). See \cite{katznelson}, VII.5. We give a proof for completeness. 

    \begin{lemma}\label{lemma:bohrCompactification}
        Let $A$ be an abelian topological group. Let $(\widehat A)_d$ be $\widehat A$ but with the discrete topology. Then $bA=\widehat{(\widehat A)_d}$, namely $A\to bA$ is isomorphic to the natural evaluation map $ev: A\to\widehat{(\widehat A)_{d}}$
    \end{lemma}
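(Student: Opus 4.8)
The plan is to verify directly that the evaluation map $ev\colon A\to K$, where $K:=\widehat{(\widehat A)_d}$, satisfies the universal property characterizing the Bohr compactification; since that universal object is determined up to a unique compatible isomorphism, this identifies $A\to bA$ with $ev\colon A\to K$. First I would record the easy observations. By \Cref{fact:pontryagin}(i), applied to the discrete abelian group $(\widehat A)_d$, the group $K$ is compact abelian. For each $a\in A$ the map $ev(a)\colon\chi\mapsto\chi(a)$ is a homomorphism $(\widehat A)_d\to\mathbb T$, hence an element of $K$, every homomorphism out of a discrete group being continuous. And $ev$ itself is continuous: since the compact subsets of the discrete group $(\widehat A)_d$ are the finite ones, the compact-open topology on $K$ is just the topology of pointwise convergence, so continuity of $ev$ amounts to continuity of each coordinate $a\mapsto\chi(a)$, which is exactly the requirement that $\chi\in\widehat A$.

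Next I would establish the universal property. Let $g\colon A\to D$ be a continuous homomorphism into a compact group $D$. Replacing $D$ by $\overline{g(A)}$ --- a compact subgroup, abelian since $g(A)$ is and since commutativity is a closed condition in a Hausdorff group --- and composing the eventual factorization back with the inclusion $\overline{g(A)}\hookrightarrow D$, I may assume $D$ is compact abelian. Then $g$ induces $\widehat g\colon\widehat D\to(\widehat A)_d$, $\rho\mapsto\rho\circ g$, which is continuous because $\widehat D$ is discrete by \Cref{fact:pontryagin}(i); a second application of the dual functor produces a continuous homomorphism $\widehat{\widehat g}\colon K\to\widehat{\widehat D}$ (again continuity is immediate, both sides carrying the pointwise topology). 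Since $ev_D\colon D\to\widehat{\widehat D}$ is an isomorphism by \Cref{fact:pontryagin}(ii), setting $h:=(ev_D)^{-1}\circ\widehat{\widehat g}$ gives a continuous homomorphism $K\to D$, and a short unwinding shows $\widehat{\widehat g}(ev(a))$ is the character $\rho\mapsto\rho(g(a))$ of $\widehat D$, that is $ev_D(g(a))$; hence $h\circ ev=g$.

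The one substantive point is the uniqueness of $h$, and I expect it to be the only real obstacle; everything else is bookkeeping with the contravariant functor $\widehat{(-)}$, made painless here because the relevant dual groups are discrete. Uniqueness reduces to showing that $ev(A)$ is dense in $K$, since two continuous maps into the Hausdorff space $D$ that agree on a dense set are equal. For the density: if $C:=\overline{ev(A)}$ were a proper closed subgroup of $K$, then $K/C$ would be a nontrivial compact abelian group and would therefore carry a nontrivial character, which pulls back to a nontrivial $\psi\in\widehat K$ vanishing on $ev(A)$; but by Pontryagin duality for the discrete group $\widehat A$ (readily bootstrapped from \Cref{fact:pontryagin}) every element of $\widehat K=\widehat{\widehat{(\widehat A)_d}}$ has the form $k\mapsto k(\chi)$ for some $\chi\in\widehat A$, and this vanishes on $ev(A)$ only when $\chi(a)=1$ for all $a\in A$, i.e. only when $\chi$, and hence $\psi$, is trivial --- a contradiction. (Alternatively the density can be proved by hand: a basic neighbourhood of $k\in K$ constrains finitely many coordinates $\chi_1,\dots,\chi_n\in\widehat A$; the closure in $\mathbb T^n$ of the image of $A$ under $(\chi_1,\dots,\chi_n)$ is the subgroup annihilated by the kernel $N$ of $\mathbb Z^n\to\widehat A$, $(m_i)\mapsto\prod_i\chi_i^{m_i}$; and $k$, being a homomorphism, annihilates $N$ as well, so $k$ lies in that closure.) Granting the density, the universal property holds and the lemma follows.
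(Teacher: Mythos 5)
Your proof is correct, but it takes a genuinely different route from the paper's. The paper takes the existence of the Bohr compactification $\iota:A\to bA$ (with dense image) as given, uses the universal property once to identify $\widehat{bA}$ with $(\widehat A)_d$ as abstract --- hence, both being discrete, topological --- groups, and then applies \Cref{fact:pontryagin}(ii) to the compact group $bA$ to get $\widehat{(\widehat A)_d}\cong\widehat{\widehat{bA}}\cong bA$. You instead verify the universal property for $ev:A\to\widehat{(\widehat A)_d}$ directly, which has the merit of actually constructing the Bohr compactification of an abelian topological group rather than presupposing it. The price is the density of $ev(A)$, which the paper gets for free from the assumed density of $\iota(A)$ in $bA$, and for which you need an input going slightly beyond \Cref{fact:pontryagin} as literally stated: either that every nontrivial compact abelian group carries a nontrivial character (equivalently, surjectivity of the evaluation map for the discrete group $(\widehat A)_d$), or, in your alternative hands-on argument, the bi-annihilator description of closed subgroups of $\mathbb T^n$. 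Both are standard, and the first is invoked elsewhere in the paper, but they are worth flagging since \Cref{fact:pontryagin}(ii) is only stated for compact groups. The remaining steps --- the reduction to abelian targets via $\overline{g(A)}$, the double dualization of $g$, and the identity $\widehat{\widehat g}\circ ev = ev_D\circ g$ --- all check out.
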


    \begin{proof}
        Let $\iota$ be the canonical continuous homomorphism from $A$ to $bA$, which has dense image.
        
        \hfill
        
        \noindent {\em Claim.} $(\widehat A)_d$ is (naturally) isomorphic to the group $\widehat{bA}$ (which note is discrete by \Cref{fact:pontryagin} (i)).

        \noindent {\em Proof of Claim.} For each continuous homomorphism $f$ from $bA$ to $\mathbb T$, $f\circ\iota$ is a continuous homomorphism from $A$ to $\mathbb T$. Moreover, by the universality property of $bA$, the homomorphism taking $f$ to $f\circ\iota$ establishes an isomorphism (of abstract groups) between the group of homomorphisms from $A$ to $\mathbb T$, and the group of homomorphisms from $bA$ to $\mathbb T$. But as $bA$ is compact, $\widehat{bA}$ is discrete, hence the map taking $f$ to $f\circ \iota$ is an isomorphism $i$ of topological groups between $(\widehat A)_d$ and $\widehat{bA}$.\qed

        By the claim, $\widehat{(\widehat A)_d}$ is isomorphic as a topological group to $\widehat{(\widehat {bA})}$, which by \Cref{fact:pontryagin} is equal to $bA$. Checking the appropriate maps gives the full statement of the proposition. 
    \end{proof}

    \subsection{Continuous logic}

    We discuss briefly continuous logic as in \cite{benyaacovBerensteinHensonUsvyatsov}, where passing from formulas to type spaces can also be interpreted as passing from a C\textsuperscript*-algebra to its Gelfand space (as we mention below). A different version of continuous logic is developed in \cite{chavarriaPillay}, from whence the question regarding Pontryagin duality originally sprang. Modulo some care regarding the definition and topologization of the space of types, the result and proof in \Cref{section:mainResult} go through essentially unchanged. We refer to the original paper for the interested reader.

    We first remind the reader of ``local type spaces" in classical first order logic. We consider an $L$-structure $M$, and a collection ${\mathcal F}$ of $L$-formulas of the form $\phi(x,\overline y)$ where the single variable $x$ is fixed and $\overline y$ can vary depending on $\phi$. Let $a\in M^*$, a (saturated) elementary extension of $M$. The $\mathcal F$-type of $a$ over $M$ is the information of which formulas of the form $\phi(x,\overline b)$ for $\phi(x,\overline y)$ in ${\mathcal F}$ and $\overline b$ a tuple from $M$ are true of $a$ in $M^*$. We call this collection of $\mathcal F$-types $S_{\mathcal F}(M)$. The formulas $\phi(x,\overline b)$ for $\phi\in {\mathcal F}$ and $\overline b$ in $M$ (modulo equivalence in $M$) generate a Boolean algebra (under $\wedge, \vee, \neg$) and $S_{\mathcal F}(M)$ is precisely the set of ultrafiliters on this Boolean algebra, equipped naturally with the Stone space topology. So the passage from a collection ${\mathcal F}$ of $ L$-formulas with a distinguished free variable $x$ to the collection of ${\mathcal F}$-types over $M$ is precisely passing from a Boolean algebra to its Stone space. 

    Continuous logic is about logic where the formulas are real- or, in our case, complex-valued and the structures may be equipped with a metric (which is equality in the discrete case). It has been developed in a mathematical logic or model-theoretic framework since the 1960's, starting with Chang and Keisler's book \cite{changKeisler}. Since about 15 years or so, a certain attractive framework has been developed, the basics of which are expounded in the papers \cite{benyaacovBerensteinHensonUsvyatsov} and \cite{benyaacovUsvyatsov}, and which the reader is referred to. Some of the interest or motivation is to do model theory or classification theory in more general environments, as well as develop tools to apply model theory to functional analysis (and even combinatorics). There are various informal and formal connections with many-valued logics which other logic communities have pursued for a long time. 

    We are interested in abelian groups $(A,+,-,0)$ with a collection ${\mathcal F}$ of homomorphisms from $A$ to ${\mathbb T}$. From the point of view of continuous logic, we can view these homomorphism as $\mathbb C$-valued predicates. We think of $+$, $-$, $0$ as (binary, unary, zero-ary) functions (or function symbols) on $A$.  Treat $=$ as a $\{0,1\}$-valued formula with value $0$ meaning equal. Atomic formulas are of the form $f(t)$ where $f\in\mathcal F$ and $t$ a term. The ``propositional connectives" are continuous functions from $\mathbb C^n$ to $\mathbb C$. The quantifiers are $\sup_x|-|$ and $\inf_x|-|$ (which are now $\mathbb R$-valued). From these we build up the relevant formulas of continuous logic. We denote by $M$ this continuous logic structure $(A,+,-,0, f)_{f\in {\mathcal F}}$ of $(A,+,-,0)$ equipped with the $\mathbb C$-valued predicates for functions in ${\mathcal F}$.

    The compactness theorem is valid, and we have saturated elementary extensions $M^*=(A^*,+^*,-^*,0^*,f^*)_{f\in\mathcal F}$ of $M$. The $\mathbb T$-valued formulas $f\in {\mathcal F}$ are without additional parameter variables. The ${\mathcal F}$-type of an element $a\in A^{*}$, $\tp_\mathcal F(a)$, is the information consisting of the values $f^{*}(a)\in {\mathbb T}$ for $f\in {\mathcal F}$ (where $f^{*}$ is the interpretation of the predicate symbol for $f$ in the model $M^{*}$). We might want to say $\tp_\mathcal F(a/M)$ but there are no parameters from $M$ to worry about. We call this collection of types $S_{\mathcal F}(M)$ (or $S_{\mathcal F}(T)$ where $T$ is the continuous logic theory of $M$). The topology on $S_{\mathcal F}(M)$ is given as follows: a basic closed set is the set of types containing the information $u(f_{1}(x),..,f_{n}(x))=r$ for $f_{i}\in {\mathcal F}$, $u:{\mathbb C}^{n}\to {\mathbb C}$ continuous and $r\in\mathbb C$. This is a compact Hausdorff space but not necessarily profinite.

    We can also see all this as the ``usual" adaptation of Stone duality to not necessarily profinite spaces. Namely, we can consider the C\textsuperscript*-algebra generated by the family ${\mathcal F}$ of continuous functions from $A$ to ${\mathbb T}\subseteq\mathbb C$, and then the Gelfand space of this C\textsuperscript*-algebra will be, as a topological space, precisely the space $S_{\mathcal F}(M)$. So from this point of view our results will exhibit Pontryagin duality as a special case of Gelfand duality. 

    Note that the quantifiers play no role in the type space $S_{\mathcal F}(M)$ (other than in defining the elementary extension $M^{*}$ of $M$ where we realize these types.)

    \section{Theorem and Proof}\label{section:mainResult}

    Let us set up the context for the main theorem. We fix an abelian topological group $A$, and let $\mathcal F$ be the collection of all {\em continuous} homomorphisms from $A$ to $\mathbb T$. So $\mathcal F$ is a group isomorphic to $\widehat A$, where the latter is considered just as an abstract group. We let $M$ be the continuous logic structure $(A,+,-,0,f)_{f\in\mathcal F}$ as in the last section. And let $M^*$ be a saturated elementary extension.

    Consider the type space $S_{\mathcal F}(M)$ as defined earlier. Note that, for $p\in S_\mathcal F(M)$, $f^*(p)$ makes sense defined as $f^{*}(a)\in {\mathbb T}$ for some/any $a$ realizing $p$. The topology on $S_\mathcal F(M)$ is the coarsest making all of these maps continuous. Now, for $p,q\in S_{\mathcal F}(M)$, define $p+^{*}q=\tp_{\mathcal F}(a+^{*}b)$ where $a$ realizes $p$ and $b$ realizes $q$. Note that $+^{*}$ is well-defined, because if $a$ realizes $p$, $b$ realizes $q$, and $f\in\mathcal F$ then the value of $f^{*}$ at $a+^{*} b$ is $f^*(a)f^*(b)$, so it depends only on the value of $f^*$ at $a$ and at $b$, so it depends only on $p$ and $q$.

    \begin{theorem}\label{theorem:mainResult}
        \begin{enumerate}[label=(\roman*)]
            \item The operation $+^*$, together with the type space topology on $S_\mathcal F(M)$, makes $S_\mathcal F(M)$ into a compact abelian topological group.
            \item This compact abelian topological group structure on $S_\mathcal F(M)$ is precisely the Pontryagin dual of $\mathcal F$ (where $\mathcal F$ is considered as a discrete group), under the identification of $p\in S_\mathcal F(M)$ with the map taking $f\in\mathcal F$ to $f^*(p)$.
            \item The map $j:A\to S_\mathcal F(M)$ taking $a\in A$ to $\tp_\mathcal F(a)$, is a continuous homomorphism which is moreover (isomorphic to) the Bohr compactification $\iota:A \to bA$ of $A$.
        \end{enumerate}
    \end{theorem}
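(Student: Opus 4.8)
The plan is to route everything through the single map
\[
\Phi:S_{\mathcal F}(M)\to\widehat{\mathcal F},\qquad \Phi(p)=\bigl(f\mapsto f^{*}(p)\bigr),
\]
where $\mathcal F$ carries the discrete topology, and to show that $\Phi$ is an isomorphism of topological groups; parts (i) and (ii) then come out together, and (iii) follows by combining this with \Cref{lemma:bohrCompactification}. Throughout I will use the two facts established earlier: that $S_{\mathcal F}(M)$ is compact Hausdorff, and that its topology is the coarsest one making every coordinate map $p\mapsto f^{*}(p)$ ($f\in\mathcal F$) continuous — so that a map into $S_{\mathcal F}(M)$, or into a subspace of a product of copies of $\mathbb T$, is continuous iff all of its coordinates are.

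First the routine verifications. Since in $M^{*}$ the identities "$f$ is a homomorphism'', "$|f(x)|=1$'' and "$(fg)(x)=f(x)g(x)$'' all hold (they hold in $M$ and are $\sup_x$-expressible, $fg\in\mathcal F$ being the pointwise product), $\Phi(p)$ really is a homomorphism $\mathcal F\to\mathbb T$, hence — $\mathcal F$ being discrete — lies in $\widehat{\mathcal F}$; and $\Phi$ is a group homomorphism because $f^{*}(p+^{*}q)=f^{*}(p)f^{*}(q)$. It is injective because a point of $S_{\mathcal F}(M)$ is by definition the data $(f^{*}(p))_{f\in\mathcal F}$. It is continuous because each coordinate $p\mapsto\Phi(p)(f)=f^{*}(p)$ is. The same coordinatewise principle shows that $+^{*}$ and the inverse operation $p\mapsto{-^{*}}p:=\tp_{\mathcal F}(-a)$ are continuous: $f^{*}(p+^{*}q)=f^{*}(p)f^{*}(q)$ and $f^{*}({-^{*}}p)=\overline{f^{*}(p)}$ factor through the continuous operations of $\mathbb T$. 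The remaining group axioms for $(S_{\mathcal F}(M),+^{*})$ (associativity, commutativity, neutral element $\tp_{\mathcal F}(0)$, inverses as above) are inherited from $A^{*}$ via well-definedness of $+^{*}$. This gives (i). Granting surjectivity of $\Phi$, it is a continuous bijection from a compact space to a Hausdorff space, hence a homeomorphism, hence — being a group homomorphism — an isomorphism of topological groups, which is (ii).

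So the one substantial point, and the expected main obstacle, is surjectivity of $\Phi$: every abstract character $\chi:\mathcal F\to\mathbb T$ must be realized as some $\tp_{\mathcal F}(a)$, $a\in A^{*}$. By the compactness theorem of continuous logic this amounts to showing that, for each finite $f_{1},\dots,f_{n}\in\mathcal F$ and each $\varepsilon>0$, there is $a\in A$ with $|f_{i}(a)-\chi(f_{i})|\le\varepsilon$ for all $i$; equivalently, that $(\chi(f_{1}),\dots,\chi(f_{n}))$ lies in the closure $H$ of the subgroup $\{(f_{1}(a),\dots,f_{n}(a)):a\in A\}$ of $\mathbb T^{n}$. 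Now $H$, being a closed subgroup of $\mathbb T^{n}$, is cut out by the characters of $\mathbb T^{n}$ that annihilate it (a standard, elementary fact about the torus): it is the set of $(z_{1},\dots,z_{n})$ with $\prod_{i}z_{i}^{m_{i}}=1$ for every $\mathbf m=(m_{i})\in\mathbb Z^{n}$ annihilating $H$. But $\mathbf m$ annihilates $H$ iff $\prod_{i}f_{i}(a)^{m_{i}}=1$ for all $a\in A$, i.e.\ iff $m_{1}f_{1}+\dots+m_{n}f_{n}=0$ in the group $\mathcal F$; and since $\chi$ is a homomorphism it kills every such relation, so $(\chi(f_{1}),\dots,\chi(f_{n}))\in H$, as needed. (Alternatively, one can get surjectivity more cheaply at the cost of invoking \Cref{lemma:bohrCompactification}: $\Phi\circ j$ is the evaluation map $A\to\widehat{(\widehat A)_{d}}$, which by that lemma has dense image, while $\Phi(S_{\mathcal F}(M))$ is compact, hence closed, hence all of $\widehat{\mathcal F}$.)

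Finally, for (iii): the map $j:A\to S_{\mathcal F}(M)$ is a homomorphism because $a+^{*}b=a+b$ for $a,b\in A$ and $+^{*}$ is well defined, and it is continuous because its coordinates $f^{*}\circ j=f$ are continuous on $A$ — this is exactly where it matters that the members of $\mathcal F$ are \emph{continuous} characters of $A$. Under the isomorphism $\Phi$ of (ii), $j$ is carried to the evaluation map $ev:A\to\widehat{(\widehat A)_{d}}$ (identifying $\mathcal F$ with the underlying set of $\widehat A$), since $\Phi(j(a))(f)=f(a)=ev(a)(f)$. By \Cref{lemma:bohrCompactification} that evaluation map is isomorphic to the Bohr compactification $\iota:A\to bA$, and composing the two isomorphisms identifies $j:A\to S_{\mathcal F}(M)$ with $\iota:A\to bA$.
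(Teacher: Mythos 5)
Your proof is correct, and its overall architecture --- package everything into the evaluation map $\Phi:S_{\mathcal F}(M)\to\widehat{\mathcal F}$, show it is an isomorphism of topological groups, and then feed the result into \Cref{lemma:bohrCompactification} --- is the same as the paper's. Two of your sub-arguments, however, take genuinely different routes. For the continuity of the group operations in (i), you exploit the fact that the type-space topology is the initial topology of the family $\{f^*\}_{f\in\mathcal F}$, so that continuity of $+^*$ and of inversion reduces to continuity of multiplication and conjugation on $\mathbb T$; the paper instead runs an explicit $\varepsilon/2$ estimate on subbasic neighbourhoods of $q_0-^*q_1$. Your version is cleaner and is fully licensed by the paper's own description of the topology. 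More significantly, for surjectivity of $\Phi$ --- the one genuinely nontrivial point of (ii) --- the paper observes that the image is a closed subgroup of $\widehat{\mathcal F}$ separating the points of $\mathcal F$ and then appeals to Stone--Weierstrass (in effect, to the duality between closed subgroups of a compact abelian group and quotients of its dual). You instead use saturation/compactness of continuous logic to reduce surjectivity to a finite statement, namely that $(\chi(f_1),\dots,\chi(f_n))$ lies in the closure of $\{(f_1(a),\dots,f_n(a)):a\in A\}$ in $\mathbb T^n$, and settle that with the elementary annihilator description of closed subgroups of $\mathbb T^n$ together with the observation that $\chi$ kills every $\mathbb Z$-linear relation among the $f_i$. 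This buys a more self-contained, finitary argument (only duality for $\mathbb T^n$ is needed, not for arbitrary compact abelian groups) and makes visible exactly where logical compactness does the work; the paper's route is shorter but leans on more abstract harmonic analysis. Your parenthetical alternative (density of $ev(A)$ from \Cref{lemma:bohrCompactification} plus compactness, hence closedness, of the image) is also valid. Part (iii) is handled essentially identically in both.
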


    \begin{proof}
        (i) The fact that $+^*$ gives a group operation on $S_\mathcal F(M)$ is obvious from the definition. To show that it is a topological group, fix some $q_0,q_1\in S_\mathcal F(M)$ and let $U=\{p\in S_\mathcal F(M):|f^*(p)-t_0t_1|<\epsilon\}$ for some $f\in\mathcal F$ and $\epsilon>0$, where $t_0=f^*(q_0)$ and $t_1=(f^*(q_1))^{-1}$. This is a subbasic open neighbourhood of $q_0-^*q_1$ in $S_\mathcal F(M)$. Let $V_0=\{p\in S_\mathcal F(M):|f^*(p)-t_0|<\epsilon/2\}$ and $V_1=\{p\in S_\mathcal F(M):|(f^{-1})^*(p)-t_1|<\epsilon/2\}$. Take $p_0\in V_0$, $p_1\in V_1$ and let $b_0,b_1\in A^*$ realize $p_0$ and $p_1$, respectively. Then
        \[
        |f^*(b_0-b_1)-t_0t_1|\leq|f^*(b_0)||(f^{-1})^*(b_1)-t_1|+|t_1||f^*(b_0)-t_0|<\epsilon. 
        \]
        (Remember that $f^*(b_0),t_1\in\mathbb T$) We conclude that the map $(p,q)\mapsto p-^*q$ is continuous at $(q_0,q_1)$ for all such pairs, as desired.

        (ii) Each $p\in S_{\mathcal F}(M)$ gives rise to a homomorphism from the group $\mathcal F$ to $\mathbb T$, which we call $p^{*}$ and which is precisely the evaluation of $f^{*}$ at a realization of $p$: $p^{*}(f) = f^{*}(a)\in {\mathbb T}$, where $a$ realizes $p$ and $f\in {\mathcal F}$. This clearly gives a homomorphism $h$ from $S_{\mathcal F}(M)$ to the Pontryagin dual $\widehat{\mathcal F}$ of the (discrete abelian) group ${\mathcal F}$. $h$ is also an embedding because $p^{*} = q^{*}$ implies $f^{*}(a) = f^{*}(b)$ for all $f\in {\mathcal F}$ and realizations $a$ of $p$ and $q$ of $p$.
        
        We want $h$ to be continuous. $\widehat{\mathcal F}$ has the compact-open topology, so take a basic open given by $O=\{\alpha:\alpha(K)\subseteq U\}$, where $K$ is a compact so finite subset $\{f_{1},\ldots,f_{n}\}$ of $\mathcal F$ and $U$ is an open subset of $\mathbb T$. Then $p^*\in O$ iff $(f_{1}(x),\ldots,f_{n}(x))\in U^n$ is in $p$, which is an open condition on $p$ in $S_{\mathcal F}(M)$. 

        So we have that $h$ is a continuous embedding of the compact group $S_{\mathcal F}(M)$ in the compact group $\widehat{\widehat{\mathcal F}}$. So $h(S_{\mathcal F}(M))$ is a closed subgroup of the image. However $h(S_{\mathcal F}(M))$ separates points in $\widehat{\mathcal F}$. To see this, suppose $f_{1}\neq f_{2}\in {\mathcal F}$. Suppose $a\in A$ and $f_{1}(a) \neq f_{2}(a)$. Let $p=\tp_{\mathcal F}(a)$. Then $p^{*}(f_{1}) \neq p^{*}(f_{2})$. Hence, by Stone-Weierstrass, $h(S_\mathcal F(M))=\widehat{\mathcal F}$. 

        (iii) $j$ is a homomorphism by definition of the group structure on $S_{\mathcal F}(M)$. For continuity: Let $C$ be a closed subset of $S_{\mathcal F}(M)$, so defined by a formula $(f_{1}(x),...,f_{n}(x))\in D$ for some closed $D\subseteq {\mathbb T}^{n}$. Then $j^{-1}(C) = \{a\in A: (f_{1}(a),..,f_{n}(a))\in D\}$ which is closed in the topological group $A$ because the $f_{i}:A\to {\mathbb T}$ are continuous. 
    
        Let $\iota: A\to bA$ be the Bohr compactification of $A$. By \Cref{lemma:bohrCompactification} this is isomorphic to $ev:A\to\widehat{({\hat A})_{d}}$. But $(\widehat A)_d$ is precisely the discrete group ${\mathcal F}$. So by part (ii) (and inspection of the maps), $ev:A\to\widehat{(\widehat A)_d}$ identifies with $j:A\to S_{\mathcal F}(M)$. This gives the result.
    \end{proof}

    \section{Comparison with the literature}
In this section we comment on the extent to which Theorem \ref{theorem:mainResult} can be extracted from or is related to results in the literature.  In Subsection 4.1 we make the connection with \cite{krupinskiPillay}. In Subsection 4.2 we make the connection with results appearing in \cite{folland} including the relation between Pontryagin and Gelfand dualities. 

    \subsection{Bohr and Stone-{\v C}ech compactifications}
    
    There are a few papers dealing with the relation between the Stone-{\v C}ech compactification $\beta G$ of a topological group $G$ and the Bohr compactification $bG$ of $G$. See, for example, \cite{zlatos} for the case of discrete abelian $G$. But we should mention to begin with that it is immediate that $bG$ is a quotient of $\beta G$. This is because $\beta G$ is the universal compact space on which $G$ acts by homeomorphisms with a dense orbit, and $G$ also acts by homeomorphisms on $bG$ with dense orbit. 

    In any case, in \cite{krupinskiPillay} we also describe $bG$ for $G$ a topological group as an image of $\beta G$, working in classical first order logic/nonstandard analysis, and this will give another account of Theorem 3.1 (iii) which we briefly describe.

    We fix a topological group $G$ and let $M$ be the structure consisting of $G$, its group operation, and predicates for {\em all} subsets of $G$. The space of complete $1$-types over $M$, which we write as $S_{1}(M)$ is by definition the space of ultrafilters on the Boolean algebra of all subsets of $G$, which is the same thing as the Stone-\v{C}ech compactification $\beta G$ of $G$. Let $M^{*}$ be a sufficiently saturated elementary extension of $M$ (nonstandard model), and write $M^{*} = (G^{*}, \times^{*}, P^{*})_{P\subseteq G}$. Then there is a smallest subgroup of $G^{*}$ which has index at most $2^{2^{|G|}}$ and is the intersection of some subsets of $G$ of the form $U^{*}$ where $U$ is an {\em open} subset of $G$. We call this subgroup $(G^{*})^{00}_{top}$. It is proved in Fact 2.4 of \cite{krupinskiPillay} that $(G^{*})^{00}_{top}$ is a normal subgroup of $G^{*}$ and that the quotient group $G^{*}/(G^{*})^{00}_{top}$ with the ``logic topology" and the natural homomorphism from $G$ is the Bohr compactification $bG$ of the topological group $G$. (The statement actually first appears in \cite{gismatullinPenazziPillay}.) 

    The logic topology is as follows. First, because of the ``bounded index" condition, the quotient homomorphism $G^{*}\to G^{*}/(G^{*})^{00}_{top}$ factors through $S_{1}(M) = \beta G$, so we have a surjective map $\pi:\beta G \to G^{*}/(G^{*})^{00}_{top}$ and the logic topology is the quotient topology. In particular, $bG$ is a continuous image of $\beta G$. 

    The proof of Lemma 2.2 of \cite{krupinskiPillay} yields that any map $f$ from $G$ to a compact space $C$ extends uniquely to a map $f^{*}:G^{*}\to C$ with the property that $f^{*}$ factors through a continuous map $S_{1}(M)\to C$. Moreover, if $C$ has a compact group structure and $f$ is a homomorphism, then $f^*$ is also a homomorphism.

    The only ``new thing" we say here is the following:

    \begin{proposition}
        Assume now that $G = A$ is an abelian topological group. Then $(A^{*})^{00}_{top}=\bigcap_{f\in\widehat A}\ker(f^*)$.
    \end{proposition}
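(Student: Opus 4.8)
The plan is to work with the natural quotient $q\colon A^*\to A^*/(A^*)^{00}_{top}=bA$, so that by construction $(A^*)^{00}_{top}=\ker q$ and the restriction $q|_A$ is the Bohr map $\iota\colon A\to bA$. The whole statement will then reduce to a single observation: for each $f\in\widehat A$, if $\bar f\colon bA\to\mathbb T$ denotes the unique continuous homomorphism with $f=\bar f\circ\iota$ provided by the universal property of $bA$, then the nonstandard extension $f^*\colon A^*\to\mathbb T$ is equal to $\bar f\circ q$. In particular this gives $\ker f^*=q^{-1}(\ker\bar f)$.

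To establish the identification $f^*=\bar f\circ q$, I would invoke the uniqueness clause of (the proof of) Lemma 2.2 of \cite{krupinskiPillay}: $f^*$ is the \emph{unique} extension of $f$ to $A^*$ that factors through a continuous map $S_1(M)=\beta A\to\mathbb T$. Now $\bar f\circ q$ also extends $f$, since $q|_A=\iota$ and $\bar f\circ\iota=f$; and it too factors through a continuous map on $\beta A$, because $q$ itself factors as $A^*\to\beta A\xrightarrow{\pi}bA$ with $\pi$ continuous for the logic topology (this is precisely how the logic topology on $bG$ is set up in \cite{krupinskiPillay}), and $\bar f$ is continuous. Hence the two agree.

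With this in hand the conclusion is bookkeeping. The assignments $\chi\mapsto\chi\circ\iota$ (for $\chi\in\widehat{bA}$) and $f\mapsto\bar f$ (for $f\in\widehat A$) are mutually inverse bijections between $\widehat{bA}$ and $\widehat A$ --- this is exactly the Claim in the proof of \Cref{lemma:bohrCompactification} --- so
\[
\bigcap_{f\in\widehat A}\ker f^* \;=\; \bigcap_{\chi\in\widehat{bA}}\ker(\chi\circ q) \;=\; q^{-1}\!\Big(\bigcap_{\chi\in\widehat{bA}}\ker\chi\Big).
\]
Since $\iota(A)$ is dense in $bA$ and $A$ is abelian, $bA$ is a compact \emph{abelian} group, so by \Cref{fact:pontryagin}(ii) the evaluation map $bA\to\widehat{\widehat{bA}}$ is injective; equivalently, the characters of $bA$ separate points, i.e.\ $\bigcap_{\chi\in\widehat{bA}}\ker\chi=\{e\}$. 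Therefore the displayed intersection equals $q^{-1}(\{e\})=\ker q=(A^*)^{00}_{top}$, as required.

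The only step here that is not purely formal is the identification $f^*=\bar f\circ q$, and the point to be careful about is matching the two ``factors through $\beta A$'' properties and applying the uniqueness clause correctly. Everything else --- that $bA$ is abelian, that characters separate points on a compact abelian group, and that $f\mapsto\bar f$ is a bijection onto $\widehat{bA}$ --- is either already recorded earlier in the paper or is standard Pontryagin duality, so I expect no genuine difficulty beyond that one point.
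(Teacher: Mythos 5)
Your proof is correct, and it reorganizes the argument in a way that genuinely differs from the paper's. The paper proves the two inclusions separately: for $(A^{*})^{00}_{top}\subseteq\ker(f^{*})$ it uses the fact (from the proof of Lemma 2.2 of \cite{krupinskiPillay}) that $\ker(f^{*})$ is an intersection of sets $U^{*}$ with $U$ open in $A$, hence contains the smallest bounded-index subgroup of that form; for the reverse inclusion it argues by contradiction, choosing a character $h$ of the compact abelian group $A^{*}/(A^{*})^{00}_{top}$ that is nontrivial on a putative offending element and observing that $h$ composed with the quotient map is a ``suitable'' homomorphism from $A^{*}$ to $\mathbb T$. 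Your single identity $f^{*}=\bar f\circ q$, extracted from the uniqueness clause of Lemma 2.2, subsumes both halves: it gives the easy inclusion for free (since $\ker q\subseteq\ker(\bar f\circ q)$) and reduces the hard one to the separation of points by characters of $bA$, which is the same essential input the paper uses. Indeed, your identification makes explicit the step the paper glosses over with the word ``suitable'' --- namely that $h\circ q$ really is $f^{*}$ for $f=h\circ\iota\in\widehat A$ --- so your write-up is arguably the more transparent of the two. The one step that needed care is exactly the one you flagged: the uniqueness clause applies because $\bar f\circ q$ both restricts to $f$ on $A$ (as $q|_{A}=\iota$ and $\bar f\circ\iota=f$) and factors through a continuous map on $\beta A$ (via $\pi$ and the continuity of $\bar f$ for the logic topology), and you have verified both; the remaining ingredients (the bijection $\widehat A\leftrightarrow\widehat{bA}$ from \Cref{lemma:bohrCompactification} and point-separation from \Cref{fact:pontryagin}) are correctly invoked.
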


    \begin{proof}
        This comes out of the proof of Fact 2.4 (ii) in \cite{krupinskiPillay}. Let $f\in\widehat A$. By Lemma 2.2 of \cite{krupinskiPillay}, for any $f\in\widehat A$, $\ker(f^*)$ is an intersection of sets $U^*$, where $U$ is an open subset of $A$, whereby $(A^*)^{00}_{top}\subseteq\bigcap_{f\in\widehat A}\ker(f^*)$.

        Hence, we have a surjective continuous homomorphism from $A^*/(A^*)^{00}_{top}$ to $A^{*}/\bigcap_{f\in\widehat A}\ker(f^*)$. We claim that this is an isomorphism (of topological groups). If not, there is nonzero $x\in A^*/(A^*)^{00}_{top}$ whose image in $A^*/\bigcap_{f\in\widehat A}\ker(f^*)$ is $0$. But, from the structure of compact abelian groups, there will be a continuous homomorphism $h:A^{*}/(A^{*})^{00}_{top}\to\mathbb T$ such that $h(x)\neq 0$. Considering the composition of the canonical homomorphism from $A^*$ to $A^*/(A^*)^{00}_{top}$ with $h$, we obtain a (suitable) homomorphism from $A^*$ to $\mathbb T$ which is nonzero, contradicting that the image of $x$ in $A^*/\bigcap_{f\in\widehat A}\ker(f^*)$ is $0$. This proves the result.
    \end{proof}

    Hence, for $p,q\in S_{1}(M)=\beta A$, $\pi(p)=\pi(q)$ (in $bA = A^{*}/(A^{*})^{00}_{top}$) if $f^*(a) = f^{*}(b)$ for each $f\in\widehat A$ and realizations $a$ of $p$ and $b$ of $q$. In other words, if $f^{*}(p) = f^{*}(q)$ for all $f\in\widehat A$. 

    Hence $bA$ can again be seen as the space of $\mathcal F$-types, where $\mathcal F =\widehat A$. But now $f\in\mathcal F$ is considered as a definable over $M$ homomorphism from $G^*$ to $\mathbb T$ in the sense of first order logic (i.e. factoring through $S_{1}(M)$). In any case, its topology as a quotient of $\beta G$ and as a type space in continuous logic in the sense of Section 2 coincide.

    \subsection{Gelfand and Pontryagin dualities}

    In the functional analysis literature, \Cref{theorem:mainResult} occurs modulo Gelfand duality, but stated just for the case where $A$ is also locally compact.  For example, Theorem 4.79 in \cite{folland}, with some paraphrasing and some editing, reads:

    \begin{theorem*}
        Let $A$ be a locally compact  abelian  group and $f:A\to\mathbb C$ a bounded and continuous function. Then the following are equivalent:
        \begin{enumerate}
            \item $f$ factors through $\iota:A\to bA$.
            \item $f$ is the uniform limit of linear combinations of characters of $A$.
        \end{enumerate}
    \end{theorem*}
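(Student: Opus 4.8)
The plan is to prove the two implications directly, using only the universal property of the Bohr compactification together with \Cref{fact:pontryagin} and \Cref{lemma:bohrCompactification}; local compactness of $A$ will in fact play no role, consistent with the remark preceding \Cref{lemma:bohrCompactification}. Two standard observations do all the work. First, by the universal property of $\iota:A\to bA$ --- equivalently, by the Claim in the proof of \Cref{lemma:bohrCompactification} --- precomposition with $\iota$ is a bijection between the characters of the compact group $bA$ (the continuous homomorphisms $bA\to\mathbb T$) and the characters of $A$. Second, $\iota$ has dense image, so $\sup_{a\in A}|g(\iota(a))|=\sup_{x\in bA}|g(x)|$ for every $g\in C(bA)$; in particular a sequence in $C(bA)$ converges uniformly on $bA$ iff its pullback along $\iota$ converges uniformly on $A$.

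For $(1)\Rightarrow(2)$, write $f=g\circ\iota$ with $g\in C(bA)$ (this is the content of $(1)$). The linear span of the characters of $bA$ is a subalgebra of $C(bA)$: it contains the constants (the trivial character), it is closed under complex conjugation (since $\overline{\chi}=\chi^{-1}$ is again a character), and it separates the points of $bA$, because by \Cref{fact:pontryagin}(ii) the evaluation map $ev:bA\to\widehat{\widehat{bA}}$ is injective. By Stone-Weierstrass this span is uniformly dense in $C(bA)$, so choose finite linear combinations $g_n$ of characters of $bA$ with $g_n\to g$ uniformly. By the bijection above each $g_n\circ\iota$ is a finite linear combination of characters of $A$, and $g_n\circ\iota\to g\circ\iota=f$ uniformly on $A$ by the density observation.

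For $(2)\Rightarrow(1)$, suppose $f_n\to f$ uniformly on $A$ with each $f_n$ a finite linear combination $\sum_i c_i^{(n)}\chi_i^{(n)}$ of characters of $A$. Using the bijection, write $\chi_i^{(n)}=\widetilde{\chi}_i^{(n)}\circ\iota$ and set $g_n=\sum_i c_i^{(n)}\widetilde{\chi}_i^{(n)}\in C(bA)$, so that $f_n=g_n\circ\iota$. Since $(f_n)$ is uniformly Cauchy on $A$ and $\iota(A)$ is dense in $bA$, the sequence $(g_n)$ is uniformly Cauchy on $bA$, hence converges uniformly to some $g\in C(bA)$; then $f=\lim_n f_n=\lim_n(g_n\circ\iota)=g\circ\iota$, so $f$ factors through $\iota$. (Boundedness of $f$ is automatic on both sides.)

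I expect the only step needing real care to be the Stone-Weierstrass argument in $(1)\Rightarrow(2)$, where the single non-formal ingredient is that the characters of the compact group $bA$ separate its points --- precisely the content of Pontryagin duality recorded in \Cref{fact:pontryagin}; everything else is bookkeeping with the universal property and the density of $\iota(A)$. Alternatively, and closer to the theme of this section, the argument can be packaged through Gelfand duality: the uniform closure in $C_b(A)$ of the linear span of $\widehat A$ is a commutative unital C\textsuperscript*-algebra, its Gelfand spectrum is by \Cref{theorem:mainResult} the type space $S_{\mathcal F}(M)\cong bA$, and the inclusion into $C_b(A)$ is dual to the map $j=\iota$; conditions $(1)$ and $(2)$ then both express membership in this C\textsuperscript*-algebra, equivalently in $\{g\circ\iota:g\in C(bA)\}$.
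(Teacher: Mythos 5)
Your argument is correct. Note, though, that the paper does not prove this statement at all: it is quoted verbatim (with paraphrasing) from Folland's book as Theorem 4.79 and used only as a point of comparison, so there is no internal proof to measure you against. What you have supplied is the standard self-contained argument, and it is sound: the two pillars are (a) the bijection $\chi\mapsto\chi\circ\iota$ between characters of $bA$ and characters of $A$ coming from the universal property (the Claim inside the proof of \Cref{lemma:bohrCompactification}), and (b) the isometry $\sup_{a\in A}|g(\iota(a))|=\sup_{x\in bA}|g(x)|$ from density of $\iota(A)$, which lets you transport uniform convergence back and forth. The Stone--Weierstrass step is handled correctly --- the span of characters of $bA$ is a unital, conjugation-closed subalgebra, and separation of points is exactly injectivity of $ev$ in \Cref{fact:pontryagin}(ii) --- and your observation that local compactness of $A$ is never used is accurate and consistent with the paper's remark that \Cref{lemma:bohrCompactification} holds for arbitrary abelian topological groups. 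It is also worth noting that your $(1)\Rightarrow(2)$ direction runs on the same fuel as the paper's own proof of \Cref{theorem:mainResult}(ii) (characters separating points plus Stone--Weierstrass), so your proof in effect recovers the Folland statement from the paper's machinery rather than the other way around, which is precisely the relationship Section 4.2 gestures at without carrying out.
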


    \noindent Paraphrasing once more, this theorem says that $C(bA)$ and the C\textsuperscript*-algebra generated by the $\mathcal F$-formulas coincide. Taking their spectra yields, on the one hand, the Bohr compactification $bG$ of $G$ (see e.g. Theorem 1.16 in \cite{folland}); and on the other, the space of $\mathcal F$-types $S_\mathcal F(M)$. In Section 3 we go through the proof of this from a model-theoretic perspective, without invoking the nature of the space of types as a Gelfand dual.

    Alternatively, the literature also reveals the Pontryagin dual of a locally compact group $G$ as the spectrum of the Banach algebra $L^1(G)$ of Haar-integrable functions on $G$ modulo the Fourier transform  (\cite{folland} Theorem 4.2). Regarding the latter, we have
 
    \begin{proposition*}[\cite{folland} Proposition 4.13]
        The Fourier transform is a norm-decreasing *-homomorphism from $L^1(G)$ to $C_0(\widehat G)$ with dense image.
    \end{proposition*}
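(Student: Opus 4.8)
The plan is to verify the algebraic and metric properties of the Fourier transform by hand, to treat continuity and decay at infinity of $\hat f$ as two separate steps, and then to deduce density from Stone--Weierstrass. Throughout, $G$ is a locally compact abelian group with Haar measure $dx$; for $f\in L^1(G)$ and $\xi\in\widehat G$ one sets $\hat f(\xi)=\int_G f(x)\overline{\xi(x)}\,dx$; the relevant involutions are $f^{\ast}(x)=\overline{f(-x)}$ on $L^1(G)$ and complex conjugation on $C_0(\widehat G)$; and one uses the standard fact (part of Pontryagin theory) that $\widehat G$ is locally compact Hausdorff.

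The formal part is routine. Since $|f(x)\overline{\xi(x)}|=|f(x)|$, the defining integral converges for every $\xi$ and $|\hat f(\xi)|\le\|f\|_1$ uniformly in $\xi$, so $\|\hat f\|_\infty\le\|f\|_1$ --- this is the norm-decreasing assertion. The identity $\widehat{f\ast g}=\hat f\,\hat g$ is Fubini (valid since $(x,y)\mapsto f(y)g(x-y)$ lies in $L^1(G\times G)$) together with $\xi(x)=\xi(x-y)\xi(y)$, and $\widehat{f^{\ast}}=\overline{\hat f}$ follows from inversion-invariance of the Haar measure on $G$ and $\overline{\xi(-x)}=\xi(x)$. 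For continuity of $\hat f$: by the uniform bound, $f\mapsto\hat f$ is continuous from $L^1(G)$ into the bounded functions on $\widehat G$ with the sup norm; on the dense subspace $C_c(G)$ continuity of $\hat f$ holds because $\xi_\alpha\to\xi$ in $\widehat G$ entails $\xi_\alpha\to\xi$ uniformly on $\operatorname{supp}f$; and uniform limits of continuous functions are continuous, so $\hat f$ is continuous for every $f\in L^1(G)$.

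The step I expect to be the main obstacle is decay at infinity, i.e.\ that $\hat f\in C_0(\widehat G)$. From the translation identity $\widehat{L_yf}(\xi)=\overline{\xi(y)}\,\hat f(\xi)$ (where $L_yf(x)=f(x-y)$) one gets $|1-\overline{\xi(y)}|\,|\hat f(\xi)|\le\|f-L_yf\|_1$ for all $y\in G$; since $y\mapsto L_yf$ is continuous $G\to L^1(G)$, for each $\varepsilon>0$ there is a neighbourhood $U$ of $0$ with $\|f-L_yf\|_1<\varepsilon/2$ for $y\in U$, and then every $\xi$ with $|\hat f(\xi)|\ge\varepsilon$ satisfies $\xi(U)\subseteq V$, where $V=\{z\in\mathbb T:|1-z|\le 1/2\}$ contains no nontrivial subgroup of $\mathbb T$. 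The set $\{\xi\in\widehat G:\xi(U)\subseteq V\}$ is closed; that it is in fact compact is the real content here, and is precisely the kind of statement that underlies the local compactness of $\widehat G$ (it can be extracted from the structure theory of locally compact abelian groups, or from the proof that $\widehat G$ is locally compact). Granting this, $\{\xi:|\hat f(\xi)|\ge\varepsilon\}$ is compact for every $\varepsilon>0$, so $\hat f\in C_0(\widehat G)$; one first runs this argument for $f\in C_c(G)$ and then extends to all of $L^1(G)$ using the norm bound and the closedness of $C_0(\widehat G)$ in $C_b(\widehat G)$.

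Finally, density. By the two preceding steps the image $\widehat{L^1(G)}$ is a self-adjoint subalgebra of $C_0(\widehat G)$. Fixing an approximate identity $(e_\alpha)$ in $L^1(G)$, one has $\hat{e_\alpha}(\xi)\to 1$ for every $\xi$, so at each point of $\widehat G$ some member of $\widehat{L^1(G)}$ is nonzero; and since $\widehat{L_ae_\alpha}(\xi)=\overline{\xi(a)}\,\hat{e_\alpha}(\xi)\to\overline{\xi(a)}$, choosing $a$ with $\xi_1(a)\ne\xi_2(a)$ shows that $\widehat{L^1(G)}$ separates any two distinct $\xi_1,\xi_2\in\widehat G$. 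Since $\widehat G$ is locally compact Hausdorff, the Stone--Weierstrass theorem for $C_0$ of a locally compact space now yields that $\widehat{L^1(G)}$ is dense in $C_0(\widehat G)$, completing the argument.
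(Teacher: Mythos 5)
Your argument is correct, but note that the paper itself offers no proof of this statement: it is quoted verbatim from Folland (Proposition 4.13) as background, so the relevant comparison is with Folland's own derivation. There the heavy lifting is done by Gelfand theory: $\widehat G$ is first identified with the spectrum of the commutative Banach $*$-algebra $L^1(G)$, so that the Fourier transform \emph{is} the Gelfand transform and the facts that it is norm-decreasing and lands in $C_0(\widehat G)$ come for free, with only the density step (Stone--Weierstrass applied to a self-adjoint, point-separating, nowhere-vanishing subalgebra) argued by hand. You instead prove everything directly in the compact-open topology on $\widehat G$ (which matches this paper's definition of the dual): the norm bound and the $*$-homomorphism identities by Fubini and invariance of Haar measure, continuity of $\hat f$ by uniform approximation from $C_c(G)$, and --- the genuinely different part --- the decay at infinity via the translation estimate $|1-\overline{\xi(y)}|\,|\hat f(\xi)|\le\|f-L_yf\|_1$, which traps $\{\xi:|\hat f(\xi)|\ge\varepsilon\}$ inside a set of the form $\{\xi:\xi(U)\subseteq V\}$. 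Your identification of the compactness of that set as ``the real content'' is accurate: it is exactly the equicontinuity (Ascoli) lemma that proves local compactness of $\widehat G$, and since you are already assuming $\widehat G$ locally compact it is legitimate to cite it, though a fully self-contained write-up would have to prove it (shrinking $U$ to a compact neighbourhood and using that $V$ contains no nontrivial subgroup). Your approach buys independence from the Gelfand machinery and fits the elementary spirit of Section 3 of the paper; Folland's buys brevity at the cost of importing the spectrum identification $\widehat G\cong\sigma(L^1(G))$. The density step is the same in both.
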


    \noindent In particular, for any locally compact abelian  group $A$, we have the Fourier transform $L^1((\widehat A)_d)\to C_0(\widehat{(\widehat A)_d})=C(bA)$. The C\textsuperscript*-algebra generated by $L^1((\widehat A)_d)$ is then the algebra of $\mathcal F$-formulas, as implied by the theorem above, and their spectra again are both the Bohr compactification $bA$ of $A$  and $S_\mathcal F(M)$.

\vspace{2mm}
\noindent
Returning to Theorem 4.79 in \cite{folland}, there is actually a third equivalent statement mentioned there: $f$ is uniformly almost periodic.  This is a strong form of stability, possibly connected to ``$1$-basedness" and we will investigate the implications in future work. 

In any case we have not managed to see our proofs in Section 3 ``mirroring"  the accounts just mentioned. 

    \bibliographystyle{plain}
    \bibliography{references}

\end{document}